\theoremstyle{ams}
\newtheorem{theorem}{Theorem}[section]
\newtheorem{proposition}[theorem]{Proposition}
\newtheorem{lemma}[theorem]{Lemma}
\newtheorem{corollary}[theorem]{Corollary}
\numberwithin{equation}{section}
\theoremstyle{definition}
\newtheorem{remark}[theorem]{Remark}
\newtheorem{example}[theorem]{Example}
\newtheorem{question}[theorem]{Question}
\newcommand{\C}{\mathbb{C}}
\newcommand{\CC}{\underline{\mathbb{C}}}
\newcommand{\Q}{\mathbb{Q}}
\newcommand{\Z}{\mathbb{Z}}
\newcommand{\CP}{\mathbb{C}P}
\newcommand{\va}{\mathbf{a}}
\newcommand{\vb}{\mathbf{b}}
\begin{document}
\title[$\Q$-trivial generalized Bott manifolds]{$\Q$-trivial generalized Bott manifolds}

\author[S.Park]{Seonjeong Park}
\address{Department of Mathematical Sciences, KAIST, 291 Daehak-ro, Yuseong-gu, Daejeon 305-701, Korea}
\email{psjeong@kaist.ac.kr}

\author[D.Y.Suh]{Dong Youp Suh}
\address{Department of Mathematical Sciences, KAIST, 291 Daehak-ro, Yuseong-gu, Daejeon 305-701, Korea}
\email{dysuh@math.kaist.ac.kr}

\thanks{The first author is supported by the second stage of the Brain Korea 21 Project, the Development of Project of Human Resources in Mathematics, KAIST in 2012.}
\thanks{The second author is partially supported by Basic Science Research Program through the national Research Foundation of Korea(NRF) founded by the Ministry of Education, Science and Technology (2012-0000795).}

\subjclass[2000]{57S25, 57R19, 57R20, 14M25}
\keywords{generalized Bott tower, generalized Bott manifold, cohomological rigidity, $\Q$-trivial generalized Bott manifold}

\date{\today}
\maketitle
\begin{abstract}
    When the cohomology ring of a generalized Bott manifold with $\Q$-coefficient is isomorphic to that of a product of complex projective spaces $\CP^{n_i}$, the generalized Bott manifold is said to be $\Q$-trivial. We find a necessary and sufficient condition for a generalized Bott manifold to be $\Q$-trivial. In particular, every $\Q$-trivial generalized Bott manifold is diffeomorphic to a $\prod_{n_i>1}\CP^{n_i}$-bundle over a $\Q$-trivial Bott manifold.
\end{abstract}

\section{Introduction}

    A \emph{generalized Bott tower of height $h$} is a sequence of complex projective space bundles
    \begin{equation}\label{def:GB}
        B_h\stackrel{\pi_h}\longrightarrow B_{h-1} \stackrel{\pi_{h-1}}\longrightarrow
        \dots \stackrel{\pi_2}\longrightarrow B_1 \stackrel{\pi_1}\longrightarrow
        B_0=\{\text{a point}\},
    \end{equation}
    where $B_i=P(\underline{\C}\oplus\xi_i)$, $\underline{\C}$ is a trivial complex line bundle, $\xi_i$ is a Whitney sum of $n_i$ complex line bundles over $B_{i-1}$, and $P(\cdot)$ stands a projectivization. Each $B_i$ is called an \emph{$i$-stage generalized Bott manifold}. When all $n_i$'s are $1$ for $i=1,\ldots,h$, the sequence~\eqref{def:GB} is called a \emph{Bott tower of height $h$} and $B_i$ is called an \emph{$i$-stage Bott manifold}.

    A ($h$-stage) generalized Bott manifold is said to be \emph{$\Q$-trivial} (respectively, \emph{$\Z$-trivial}) if $H^\ast(B_h;\Q) \cong H^\ast(\prod_{i=1}^h \CP^{n_i};\Q)$ (respectively, $H^\ast(B_h;\Z) \cong H^\ast(\prod_{i=1}^h \CP^{n_i};\Z)$). It is shown in~\cite{CMS10b} that if $B_h$ is $\Z$-trivial, then every fiber bundle in the tower~\eqref{def:GB} is trivial so that $B_h$ is diffeomorphic to $\prod_{i=1}^h \CP^{n_i}$. Furthermore, Choi and Masuda show that every ring isomorphism between $\Z$-cohomology rings of two $\Q$-trivial Bott manifolds is induced by some diffeomorphism between them (see Theorem \ref{thm:CM} and \cite{CM}).

    We find a necessary and sufficient condition for a generalized Bott manifold to be $\Q$-trivial. Namely, we have the following proposition.

    \begin{proposition}\label{prop:Q-trivial}
        An $h$-stage generalized Bott manifold $B_h$ is $\Q$-trivial if and only if each vector bundle $\xi_i$, $i=1,\ldots,h$, satisfies
        \begin{equation}\label{relations: chern classes}
            (n_i+1)^kc_k(\xi_{i})={n_i+1\choose k}c_1(\xi_{i})^k
        \end{equation}
        for $k=1,\ldots,n_i+1$, where $B_i=P(\CC\oplus\xi_i)$.
    \end{proposition}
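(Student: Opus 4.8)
The plan is to prove the two implications separately, the forward (``only if'') one by induction on $h$.

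\smallskip
\textbf{Sufficiency of \eqref{relations: chern classes}.} By the projective bundle theorem, $H^*(B_i;\Q)$ is free over $H^*(B_{i-1};\Q)$ with basis $1,x_i,\dots,x_i^{n_i}$, where $x_i\in H^2(B_i)$ is the first Chern class of the tautological quotient line bundle on $B_i=P(\CC\oplus\xi_i)$, subject to the single relation $g_i:=\sum_{k=0}^{n_i}c_k(\xi_i)\,x_i^{\,n_i+1-k}=0$ (here $c_0=1$ and $c_{n_i+1}(\xi_i)=0$ since $\operatorname{rank}\xi_i=n_i$). Working over $\Q$, put $y_i:=x_i+\tfrac{1}{n_i+1}\,c_1(\xi_i)$, with $c_1(\xi_i)\in H^2(B_{i-1};\Q)$ viewed inside $H^2(B_i;\Q)$. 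Expanding the binomial and substituting \eqref{relations: chern classes}---which rewrites the $k$-th binomial term as $c_k(\xi_i)x_i^{\,n_i+1-k}$ for $1\le k\le n_i$ and makes the $k=n_i+1$ term $\tfrac{c_1(\xi_i)^{n_i+1}}{(n_i+1)^{n_i+1}}$ vanish---gives $y_i^{\,n_i+1}=g_i=0$. As the change of basis from $1,x_i,\dots,x_i^{n_i}$ to $1,y_i,\dots,y_i^{n_i}$ is unitriangular over $H^*(B_{i-1};\Q)$, one gets $H^*(B_i;\Q)\cong H^*(B_{i-1};\Q)[y_i]/(y_i^{\,n_i+1})$, and induction on $i$ yields $H^*(B_h;\Q)\cong\bigotimes_{i=1}^h\Q[y_i]/(y_i^{\,n_i+1})\cong H^*\!\big(\prod_{i=1}^h\CP^{n_i};\Q\big)$.

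\smallskip
\textbf{Necessity: reduction.} Suppose $B_h$ is $\Q$-trivial and fix an isomorphism $\phi\colon H^*(B_h;\Q)\xrightarrow{\ \sim\ }R:=H^*\!\big(\prod_{i=1}^h\CP^{n_i};\Q\big)\cong\Q[z_1,\dots,z_h]/(z_1^{n_1+1},\dots,z_h^{n_h+1})$. I argue by induction on $h$ (the case $h\le1$ is vacuous, as $\xi_1$ is trivial). Because $\CC\oplus\xi_h$ has a trivial line-subbundle---equivalently $\xi_h=L_h^{(1)}\oplus\cdots\oplus L_h^{(n_h)}$ splits and $g_h=x_h\prod_l\big(x_h+c_1(L_h^{(l)})\big)$ is divisible by $x_h$---the section $B_{h-1}\to B_h$ through the trivial summand induces a ring isomorphism $H^*(B_h;\Q)/(x_h)\cong H^*(B_{h-1};\Q)$. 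Hence $H^*(B_{h-1};\Q)\cong R/(\ell)$ with $\ell:=\phi(x_h)\in R^2$, $\ell\neq0$.

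\smallskip
\textbf{Necessity: $B_{h-1}$ is $\Q$-trivial, and the top bundle.} $R$ is a graded complete intersection in $\Q[z_1,\dots,z_h]$, while $R/(\ell)\cong H^*(B_{h-1};\Q)$ is Gorenstein (Poincar\'e duality). Thus the $\mathfrak m$-primary Gorenstein ideal $(z_1^{n_1+1},\dots,z_h^{n_h+1},\ell)$, generated by at most $h+1$ elements, is a complete intersection by Kunz's theorem that a Gorenstein almost complete intersection is a complete intersection. As $\ell$ is linear, a linear change of coordinates makes it a coordinate, after which $R/(\ell)$ becomes $\Q[v_1,\dots,v_{h-1}]/(L_1^{n_1+1},\dots,L_h^{n_h+1})$ with each $L_j$ a linear form; complete-intersectionality lets one discard a relation and, using $h-1$ independent $L_j$'s as coordinates, identify $R/(\ell)$ with $\Q[v_1,\dots,v_{h-1}]/(v_1^{m_1+1},\dots,v_{h-1}^{m_{h-1}+1})$. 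Comparing Poincar\'e polynomials, together with $\dim_\Q H^*(B_{h-1};\Q)=\prod_{i<h}(n_i+1)=\prod_i(n_i+1)/(n_h+1)$, forces the multiset $\{m_j\}$ to equal $\{n_1,\dots,n_{h-1}\}$, so $B_{h-1}$ is $\Q$-trivial and, by the inductive hypothesis, $\xi_1,\dots,\xi_{h-1}$ satisfy \eqref{relations: chern classes}. For $\xi_h$: now $H^*(B_{h-1};\Q)\cong\Q[y_1,\dots,y_{h-1}]/(y_i^{\,n_i+1})$ and $H^*(B_h;\Q)$ is free over it on $1,x_h,\dots,x_h^{n_h}$ with $x_h^{\,n_h+1}=-\sum_{k=1}^{n_h}c_k(\xi_h)x_h^{\,n_h+1-k}$. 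Since $\pi_h^*H^*(B_{h-1};\Q)$ is a proper subalgebra of $H^*(B_h;\Q)\cong R$, one produces a degree-$2$ class $w=v+t\,x_h$ with $v\in H^2(B_{h-1};\Q)$, $t\neq0$ and $w^{\,n_h+1}=0$. Expanding $w^{\,n_h+1}$ in the basis $1,x_h,\dots,x_h^{n_h}$ and setting the $n_h+1$ coefficients (which lie in $H^*(B_{h-1};\Q)$) to zero yields $v^{\,n_h+1}=0$ and $\binom{n_h+1}{k}t^k v^{\,n_h+1-k}=t^{\,n_h+1}c_{n_h+1-k}(\xi_h)$ for $1\le k\le n_h$; the $k=n_h$ equation gives $v/t=c_1(\xi_h)/(n_h+1)$, and the rest, with $v^{n_h+1}=0$, are exactly the relations \eqref{relations: chern classes} for $\xi_h$. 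This closes the induction.

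\smallskip
\textbf{Where the difficulty lies.} The binomial computation and the projective bundle formula are routine. The substantive ingredient is the commutative-algebra step (Gorenstein $+$ almost complete intersection $\Rightarrow$ complete intersection), which upgrades ``$R/(\ell)$ has the correct Poincar\'e polynomial'' to ``$R/(\ell)$ is again a product of truncated polynomial rings'', together with the bookkeeping that fixes the exponent multiset. The most delicate point, however, is producing the class $w$: one must guarantee a degree-$2$ class that genuinely involves $x_h$ and whose vanishing power is exactly $n_h+1$ rather than larger---equivalently, that $\phi$ can be chosen compatibly with $\pi_h$. Handling this carefully, very likely by exploiting all $n_h+1$ sections of $B_h\to B_{h-1}$ arising from the splitting $\CC\oplus\xi_h=\CC\oplus L_h^{(1)}\oplus\cdots\oplus L_h^{(n_h)}$, is the crux of the argument.
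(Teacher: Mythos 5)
Your sufficiency argument coincides with the paper's: the classes $x_i+\tfrac{1}{n_i+1}c_1(\xi_i)$ have vanishing $(n_i+1)$-st powers and generate $H^\ast(B_h;\Q)$. Your necessity argument, however, takes a genuinely different route from the paper's. You induct on the height, using $H^\ast(B_h;\Q)/(x_h)\cong H^\ast(B_{h-1};\Q)$ together with the Gorenstein property of Poincar\'e duality algebras and Kunz's theorem on almost complete intersections to conclude that $B_{h-1}$ is again $\Q$-trivial, and then extract the relations~\eqref{relations: chern classes} for $\xi_h$ from a single class $w=v+tx_h$ with $t\neq 0$ and $w^{n_h+1}=0$. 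The paper instead argues globally: it produces $h$ independent primitive classes $z_i$ with $z_i^{n_i+1}=0$ and $z_i^{n_i}\neq 0$ (Lemma~\ref{lem:Q-triviality}) and then, by an induction over the distinct fiber dimensions $N_1<\cdots<N_m$ using Lemmas~\ref{lem:one-dimensional} and~\ref{lem:non-zero coefficient}, matches each $z_i$ to the $i$-th stage, obtaining $z_i=r_i\bigl(x_i+\tfrac{1}{n_i+1}c_1(\xi_i)\bigr)$ for every $i$ simultaneously. Your ring-theoretic reduction is heavier machinery but is a legitimate alternative way to see that the lower stages remain $\Q$-trivial; your final coefficient extraction from $w^{n_h+1}=0$ in the basis $1,x_h,\dots,x_h^{n_h}$ is correct.

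The problem is that, as written, your proof has a genuine gap exactly where you say the crux lies: the existence of $w=v+tx_h$ with $t\neq 0$ and $w^{n_h+1}=0$ is asserted (``one produces\dots'') but never established, and your closing paragraph only speculates about how one might do it. The difficulty is real: the only classes with prescribed vanishing powers that the isomorphism $\phi$ hands you are $u_i=\phi^{-1}(z_i)$ with $u_i^{n_i+1}=0$, and the ones whose $x_h$-coefficient is nonzero could a priori all have $n_i>n_h$, in which case $u_i^{n_h+1}$ need not vanish and no usable $w$ is produced. This is precisely the bookkeeping the paper's Lemmas~\ref{lem:one-dimensional} and~\ref{lem:non-zero coefficient} exist to do. The gap is fillable along those lines: Lemma~\ref{lem:non-zero coefficient} shows that any class $z=\sum_j b_jx_j$ with $z^{N+1}=0$ has $b_j=0$ whenever $n_j>N$; hence the classes $u_i$ with $n_i\le n_h$ span the subspace spanned by $\{x_j : n_j\le n_h\}$, which contains $x_h$, so some such $u_i$ has nonzero $x_h$-coefficient, and for that $u_i$ the same lemma forces $n_i\ge n_h$, whence $n_i=n_h$ and $w=u_i$ works. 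Without an argument of this kind your induction does not close, so you should supply it explicitly rather than defer it.
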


    Moreover, the following theorem says that a $\Q$-trivial generalized Bott manifold without $\CP^1$-fibration is weakly equivariantly diffeomorphic to a trivial generalized Bott manifold.

    \begin{theorem}\label{thm:equivalent conditions}
        Let $B_h$ be a generalized Bott manifold such that all $n_i$'s are greater than $1$. Then the following are equivalent
        \begin{enumerate}
            \item $B_h$ is $\Q$-trivial,\label{thm:equiv_condition 1}
            \item total Chern class $c(\xi_i)$ is trivial for each $i=1,\ldots,h$,\label{thm:equiv_condition 2}
            \item $B_h$ is $\Z$-trivial, and\label{thm:equiv_condition 3}
            \item $B_h$ is weakly equivariantly diffeomorphic to the product of projective spaces $\prod_{i=1}^h \CP^{n_i}$.\label{thm:equiv_condition 4}
        \end{enumerate}
    \end{theorem}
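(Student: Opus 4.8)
The plan is to establish the cycle of implications $(1)\Rightarrow(2)\Rightarrow(3)\Rightarrow(4)\Rightarrow(1)$; three of the four steps are essentially formal. The implication $(4)\Rightarrow(1)$ is immediate, since a (weakly equivariant) diffeomorphism induces a ring isomorphism on rational cohomology. For $(2)\Rightarrow(3)$ I would induct on $h$: assuming $c(\xi_i)=1$ for all $i$ and $H^\ast(B_{i-1};\Z)\cong H^\ast(\prod_{j<i}\CP^{n_j};\Z)$ (which is torsion free), then, because $c(\CC\oplus\xi_i)=c(\xi_i)=1$, the projective bundle formula for $B_i=P(\CC\oplus\xi_i)$ collapses to $H^\ast(B_i;\Z)\cong H^\ast(B_{i-1};\Z)[y_i]/(y_i^{n_i+1})\cong H^\ast(\prod_{j\le i}\CP^{n_j};\Z)$. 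For $(3)\Rightarrow(4)$ I would quote the theorem of \cite{CMS10b} recalled above: $\Z$-triviality forces every line bundle occurring in each $\xi_i$ to be trivial, so the fan of $B_h$ as a toric manifold coincides with the product of the fans of the $\CP^{n_i}$, whence $B_h$ is equivariantly --- in particular weakly equivariantly --- diffeomorphic to $\prod_{i=1}^h\CP^{n_i}$.

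The substance is in $(1)\Rightarrow(2)$, which I would prove by induction on $h$, the case $h\le1$ being trivial since $\xi_1$ is a bundle over a point. For the inductive step, observe first that the conditions of Proposition~\ref{prop:Q-trivial} for $B_{h-1}$ form a subcollection of those for $B_h$, so $B_{h-1}$ is $\Q$-trivial; the induction hypothesis then yields $c(\xi_j)=1$ for all $j<h$, and the implication $(2)\Rightarrow(3)$ just established gives
\begin{equation*}
H^\ast(B_{h-1};\Z)\cong\Z[u_1,\dots,u_{h-1}]/(u_1^{n_1+1},\dots,u_{h-1}^{n_{h-1}+1}),
\end{equation*}
a torsion-free ring in which $u_j^2\neq0$ for every $j$ --- this is the one place the hypothesis $n_j>1$ is used. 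Write $\xi_h=\bigoplus_{l=1}^{n_h}L_l$ with $c_1(L_l)=\sum_{j=1}^{h-1}a_{lj}u_j$ ($a_{lj}\in\Z$), so that $c_1(\xi_h)=\sum_j b_ju_j$ with $b_j=\sum_l a_{lj}$, and $c_k(\xi_h)$ is the $k$-th elementary symmetric polynomial in the $c_1(L_l)$. It then suffices to show that all $a_{lj}$ vanish, for then $\xi_h$ is trivial and in particular $c(\xi_h)=1$.

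For this I would use only the case $k=2$ of \eqref{relations: chern classes}, namely $(n_h+1)^2c_2(\xi_h)=\binom{n_h+1}{2}c_1(\xi_h)^2$. Combining it with Newton's identity $\sum_l c_1(L_l)^2=c_1(\xi_h)^2-2c_2(\xi_h)$ and with torsion-freeness of $H^\ast(B_{h-1};\Z)$, it rearranges to
\begin{equation*}
(n_h+1)\sum_{l=1}^{n_h}c_1(L_l)^2=c_1(\xi_h)^2\quad\text{in }H^4(B_{h-1};\Z).
\end{equation*}
Comparing coefficients of the nonzero basis element $u_j^2$ on the two sides gives $(n_h+1)\sum_l a_{lj}^2=\big(\sum_l a_{lj}\big)^2$ for each $j$; but the Cauchy--Schwarz inequality bounds the right-hand side by $n_h\sum_l a_{lj}^2$, so $(n_h+1)\sum_l a_{lj}^2\le n_h\sum_l a_{lj}^2$, forcing $\sum_l a_{lj}^2\le0$ and hence $a_{lj}=0$ for all $l$ and $j$. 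This closes the induction.

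The only genuine obstacle is the bookkeeping guaranteeing that $H^\ast(B_{h-1};\Z)$ really has the displayed form with every $u_j^2\neq0$, which is precisely where $n_i>1$ enters; the restriction cannot be dropped, since for $n_j=1$ one has $u_j^2=0$, the coefficient comparison becomes vacuous, and the implication fails (for instance the rank-$2$ bundle $\mathcal O\oplus\mathcal O(3)$ over $\CP^1$ satisfies \eqref{relations: chern classes}, so the associated $B_2$ is $\Q$-trivial, yet its total Chern class is $1+3u\neq1$). Everything else is routine once Proposition~\ref{prop:Q-trivial} and the cited result of \cite{CMS10b} are in hand.
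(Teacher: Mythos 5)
Your proof is correct and follows essentially the same route as the paper: the only substantive step, \eqref{thm:equiv_condition 1}$\Rightarrow$\eqref{thm:equiv_condition 2}, is in both cases reduced to the $k=2$ relation $(n_i+1)^2c_2(\xi_i)=\binom{n_i+1}{2}c_1(\xi_i)^2$ from Proposition~\ref{prop:Q-trivial}, followed by comparing coefficients of the nonzero squares of the degree-two generators (where $n_i>1$ enters) and a positivity argument forcing all $a_{lj}=0$. The only differences are cosmetic: your Newton-identity-plus-Cauchy--Schwarz packaging treats $n_i=2$ and $n_i>2$ uniformly where the paper splits into two cases via an explicit sum-of-squares identity, and your induction on $h$ supplies the basis of $H^4(B_{h-1};\Z)$ that the paper takes for granted.
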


    In the light of Theorem~\ref{thm:equivalent conditions}, we have a natural question.

    \begin{question}\label{question:not Z-isomorphic}
        Let $B_h$ and $B'_h$ be generalized Bott manifolds with $n_i>1$, $i=1,\ldots,h$. Is $H^\ast(B_h;\Z)$ isomorphic to $H^\ast(B'_h;\Z)$ if $H^\ast(B_h;\Q) \cong H^\ast(B'_h;\Q)$?
    \end{question}
    Unfortunately, Example~\ref{example:not Z-isomorphic} shows that the answer to the question is negative.

    From the proposition, we can deduce the following theorem.
    \begin{theorem}\label{thm:Q-trivial gen. Bott}
        Every $\Q$-trivial generalized Bott manifold is diffeomorphic to a $\prod_{n_i>1}\CP^{n_i}$-bundle over a $\Q$-trivial Bott manifold.
    \end{theorem}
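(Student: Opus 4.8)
The plan is to reorganize the tower \eqref{def:GB} so that all the $\CP^1$-fibrations are performed first, then to recognize that the remaining higher-dimensional stages are pulled back from that bottom part; the claimed bundle structure then drops out. Throughout I use the equivalence "$B_h$ is $\Q$-trivial $\iff$ \eqref{relations: chern classes} holds for all $i$" from Proposition~\ref{prop:Q-trivial}, and that a line bundle on a generalized Bott manifold is determined by its first Chern class.

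\textbf{Reducing to a sorted tower.} Suppose two consecutive stages of \eqref{def:GB} have $n_i>1$ and $n_{i+1}=1$. Since $\xi_{i+1}$ is a line bundle, the $k=2$ instance of \eqref{relations: chern classes} gives $c_1(\xi_{i+1})^2=0$ in $H^\ast(B_i;\Q)$. Write $c_1(\xi_{i+1})=\pi_i^\ast\beta+m\zeta_i$ according to the splitting $H^2(B_i)=\pi_i^\ast H^2(B_{i-1})\oplus\Z\zeta_i$ (with $\beta\in H^2(B_{i-1})$, $m\in\Z$, and $\zeta_i$ the canonical degree-two class of the projective bundle $B_i\to B_{i-1}$). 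Because $n_i>1$, the class $\zeta_i^2$ lies in a basis of $H^\ast(B_i;\Q)$ as an $H^\ast(B_{i-1};\Q)$-module, and squaring $c_1(\xi_{i+1})$ shows the coefficient of $\zeta_i^2$ equals $m^2$; hence $m=0$ and $\xi_{i+1}=\pi_i^\ast\eta_{i+1}$, where $\eta_{i+1}$ is the line bundle on $B_{i-1}$ with $c_1(\eta_{i+1})=\beta$. Then $B_{i+1}=P(\CC\oplus\pi_i^\ast\eta_{i+1})=B_i\times_{B_{i-1}}P(\CC\oplus\eta_{i+1})=P(\CC\oplus p^\ast\xi_i)$ with $p\colon P(\CC\oplus\eta_{i+1})\to B_{i-1}$, so we may replace the subtower $B_{i-1}\to B_i\to B_{i+1}$ by $B_{i-1}\to P(\CC\oplus\eta_{i+1})\to B_{i+1}$ without altering $B_{i+1}$, hence without altering $B_h$ or any later stage. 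Since $\pi_i^\ast$ and $p^\ast$ are injective ring maps that preserve \eqref{relations: chern classes}, the new tower is again $\Q$-trivial. Iterating this interchange finitely often (the number of pairs $i<i'$ with $n_i>1=n_{i'}$ strictly drops each time) we may assume the first $\ell$ stages have $n_i=1$ and all later stages have $n_i>1$. Put $B':=B_\ell$; it is a $\Q$-trivial Bott manifold.

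\textbf{The higher bundles are pulled back from $B'$.} For $j>\ell$ I claim $\xi_j$ is pulled back from $B_\ell$ along $\pi\colon B_{j-1}\to B_\ell$, by induction on $j$, the case $j=\ell+1$ being vacuous. Write $\xi_j=\bigoplus_{r=1}^{n_j}L_{j,r}$ and expand $y_r:=c_1(L_{j,r})=u_r+\sum_{\ell<k<j}a_{rk}\zeta_k$ with $u_r\in H^2(B_\ell;\Z)$ and $a_{rk}\in\Z$, using the module basis $\{\prod_k\zeta_k^{m_k}:0\le m_k\le n_k\}$ of $H^\ast(B_{j-1};\Q)$ over $H^\ast(B_\ell;\Q)$. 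Combining the $k=2$ case of \eqref{relations: chern classes} with $c_1(\xi_j)=\sum_r y_r$ and $c_2(\xi_j)=\sum_{r<s}y_r y_s$ rearranges to $\sum_r y_r^2=\tfrac1{n_j+1}\bigl(\sum_r y_r\bigr)^2$. Since every $n_k>1$ for $\ell<k<j$, each $\zeta_k^2$ is an honest basis element; comparing its coefficient on the two sides of this identity gives $(n_j+1)\sum_r a_{rk}^2=\bigl(\sum_r a_{rk}\bigr)^2$ for each such $k$. But by Cauchy--Schwarz $\bigl(\sum_{r=1}^{n_j}a_{rk}\bigr)^2\le n_j\sum_{r=1}^{n_j}a_{rk}^2$, which forces $\sum_r a_{rk}^2=0$ and hence $a_{rk}=0$ for all $r,k$. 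Therefore each $y_r=\pi^\ast u_r$, so $L_{j,r}=\pi^\ast\bar L_r$ with $c_1(\bar L_r)=u_r$ on $B_\ell$, and $\xi_j=\pi^\ast\eta_j$ with $\eta_j:=\bigoplus_r\bar L_r$.

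\textbf{Assembling the bundle and the main obstacle.} Writing $\xi_j=\pi^\ast\eta_j$ for $\ell<j\le h$ and using $P(\CC\oplus\pi^\ast\eta_j)=B_{j-1}\times_{B_\ell}P(\CC\oplus\eta_j)$ at each stage, $B_h$ becomes the fibered product over $B_\ell$ of the projective bundles $P(\CC\oplus\eta_j)\to B_\ell$, $\ell<j\le h$; equivalently, $B_h$ is a fiber bundle over $B'=B_\ell$ with fiber $\prod_{\ell<j\le h}\CP^{n_j}=\prod_{n_i>1}\CP^{n_i}$, as claimed. I expect the delicate point to be the pull-back statement of the second step: a priori $\xi_j$ could genuinely mix the $\CP^1$-directions $\zeta_k$ with the higher ones, and it is precisely the innocuous-looking degree-two Chern identity that, once unwound in the cohomology basis, forces equality in Cauchy--Schwarz and thereby kills all the integer coefficients $a_{rk}$. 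This is also where it is essential that every intermediate stage already has $n_k>1$, i.e., that the sorting of the first step has been carried out beforehand, and one must keep track that each interchange leaves $B_h$ itself unchanged, so that the resulting $\prod_{n_i>1}\CP^{n_i}$-bundle really is diffeomorphic to the original $B_h$.
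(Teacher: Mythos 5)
Your proof is correct and follows essentially the same route as the paper's: both use the $k=2$ Chern-class relation from Proposition~\ref{prop:Q-trivial} together with the nonvanishing of $x_k^2$ for $n_k>1$ to show that every $\xi_j$ is pulled back from the $\CP^1$-part of the tower, and then reorder the tower to exhibit the resulting block/bundle structure. The only cosmetic differences are that you sort the tower by adjacent interchanges before killing the mixing coefficients (the paper kills them first and then invokes Lemma~\ref{lem:ordering of the tower}), and you use Cauchy--Schwarz where the paper writes out the equivalent sum-of-squares identity.
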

%
%
%


    The remainder of this paper is organized as follows. In section~\ref{sec:cohomology ring of a generalized Bott manifold}, we recall general facts on a generalized Bott manifold and deal with its cohomology ring. In section~\ref{sec:Q-trivial generalized Bott manifolds}, we prove Proposition~\ref{prop:Q-trivial}, Theorem~\ref{thm:equivalent conditions}, and Theorem~\ref{thm:Q-trivial gen. Bott}.

\section{Cohomology ring of a generalized Bott manifold}\label{sec:cohomology ring of a generalized Bott manifold}

    Let $B$ be a smooth manifold and let $E$ be a complex vector bundle over $B$. Let $P(E)$ denote the projectivization of $E$. Let $y \in H^2(P(E))$ be the negative of the first Chern class of the tautological line bundle over $P(E)$. Then $H^\ast(P(E))$ can be viewed as an algebra over $H^\ast(B)$ via $\pi^\ast\colon H^\ast(B) \rightarrow H^\ast(P(E))$, where $\pi\colon P(E) \rightarrow B$ denotes the projection. When $H^\ast(B)$ is finitely generated and torsion free (this is the case when $B$ is a toric manifold), $\pi^\ast$ is injective and $H^\ast(P(E))$  as an algebra over $H^\ast(B)$ is known to be described as
    \begin{equation}\label{B-H formula}
        H^\ast(P(E))=H^\ast(B)[y]\left/\left\langle\sum_{k=0}^n c_k(E)y^{n-k}\right\rangle\right.,
    \end{equation}
    where $n$ denotes the complex dimension of the fiber of $E$ (see~\cite{BH}).

    For a generalized Bott manifold $B_h$ in~\eqref{def:GB}, since $\pi_j^\ast\colon H^\ast(B_{j-1}) \to H^\ast(B_j)$ is injective, we regard $H^\ast(B_{j-1})$ as a subring of $H^\ast(B_j)$ for each $j$ so that we have a filtration
    $$H^\ast(B_h) \supset H^\ast(B_{h-1}) \supset \cdots \supset H^\ast(B_1).$$ Let $x_j \in H^2(B_j)$ denote minus the first Chern class of the tautological line bundle over $B_j=P(\underline{\C}\oplus\xi_j)$. We may think of $x_j$ as an element of $H^2(B_i)$ for $i \geq j$. Then the repeated use of~\eqref{B-H formula} shows that the ring structure of $H^\ast(B_h)$ can be described as
    \begin{equation*}
        H^\ast(B_h)=\Z[x_1,\ldots,x_h]\left/\left\langle x_i^{n_i+1}+c_1(\xi_i)x_i^{n_i}+\cdots+c_{n_i}(\xi_i)x_i \middle| i=1,\ldots,h \right\rangle.\right.
    \end{equation*}
    Let $\xi_{2,1}$ be the tautological line bundle over $B_1=\CP^{n_1}$ and let $\xi_{3,1}=\pi_2^\ast(\xi_{2,1})$ the pull-back bundle of the tautological line bundle over $B_1$ to $B_2$ via the projection $\pi_2\colon B_2 \to B_1$. In general, let $\xi_{j,j-1}$ be the tautological line bundle over $B_{j-1}$ and we define inductively
    $$\xi_{j,j-k}=\pi_{j-1}^\ast\circ\cdots\circ\pi_{j-k+1}^\ast(\xi_{j-k+1,j-k})$$
    for $k=2,\ldots,j-1$. Then one can see that the Whitney sum of complex line bundles $\xi_i$ over $B_{i-1}$ in the sequence~\eqref{def:GB} can be written as $$\xi_i:=(\xi_{i,1}^{a_{11}^i}\otimes\cdots\otimes\xi_{i,i-1}^{a_{1,i-1}^i})\oplus
    \cdots\oplus(\xi_{i,1}^{a_{n_i,1}^{i}}\otimes\cdots\otimes\xi_{i,i-1}^{a_{n_i,i-1}^i})$$ for some integers $a_{11}^i,\ldots,a_{n_i,i-1}^i$. Note that $\xi_1=(\underline{\C})^{n_1}$. Hence, the total Chern class of $\xi_i$ is
    \begin{equation}\label{eq:chern class formula}
        c(\xi_i)=\prod_{j=1}^{n_i}\left(1+\sum_{k=1}^{i-1}a_{jk}^ix_k\right).
    \end{equation}
    Therefore, the cohomology ring of $B_h$ is
    \begin{equation}\label{cohomology ring of B_h}
        \begin{split}
        &H^\ast(B_h;\Z)\\
        &=\Z[x_1,\ldots,x_h]\left/ \left\langle x_i^{n_i+1}+c_1(\xi_i)x_i^{n_i}+\cdots+c_{n_i}(\xi_i)x_i\middle| i=1,\ldots,h \right\rangle\right.\\
        &=\Z[x_1,\ldots,x_h]\left/\left\langle x_i\prod_{j=1}^{n_i}(\sum_{k=1}^{i-1}a_{jk}^ix_k+x_i)\middle| i=1,\ldots,h\right\rangle.\right.
        \end{split}
    \end{equation}

    \begin{remark}\label{rmk:associated matrix}
        We can associate a generalized Bott manifold $B_h$ with an $h \times h$ vector matrix $A$ as follows:
            \begin{equation}\label{associated matrix}
            A^T = \left(\begin{array}{cccc}
            \mathbf{1} & & & \\
            \mathbf{a}_1^2&\mathbf{1}&&\\
            \vdots&\vdots&\ddots& \\
            \mathbf{a}_1^h&\mathbf{a}_2^h&\cdots&\mathbf{1}
            \end{array}\right),
        \end{equation}
        where $$\mathbf{a}_k^i=\left(\begin{array}{c}a_{1k}^i\\\vdots\\a_{n_ik}^i\end{array}\right) \mbox{ and } \mathbf{1}=\left(\begin{array}{c}1\\\vdots\\1\end{array}\right).$$ Moreover we can consider $B_h$ as a quasitoric manifold over the product of simplices $\prod_{i=1}^h \Delta^{n_i}$ with the reduced characteristic matrix $\Lambda_\ast=-A^T$.
    \end{remark}

\section{$\Q$-trivial generalized Bott manifolds}\label{sec:Q-trivial generalized Bott manifolds}

    As we mentioned in the introduction, Choi and Masuda classify $\Q$-trivial Bott manifolds as follows.
    \begin{theorem}\cite{CM}\label{thm:CM}
        \begin{enumerate}
            \item A Bott manifold $B_h$ is $\Q$-trivial if and only if for each $i=1,\ldots,h$, each line bundle $\xi_i$ satisfies $c_1(\xi_i)^2=0$ in $H^\ast(B_h;\mathbb{Z})$.
            \item Every ring isomorphism $\varphi$ between two $\Q$-trivial Bott manifolds $B_h$ and $B_h'$ is induced by some diffeomorphism $B_h \to B_h'$.
        \end{enumerate}
    \end{theorem}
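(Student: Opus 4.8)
The plan is to treat the two parts in turn, first proving the cohomological criterion (1) and then leveraging the resulting normal form to obtain the rigidity statement (2). Throughout I write the Bott presentation $H^\ast(B_h;\Q)=\Q[x_1,\dots,x_h]/\langle x_i^2+c_1(\xi_i)x_i\rangle$ with $c_1(\xi_i)=\sum_{k<i}\alpha_{ik}x_k$, where $\alpha_{ik}:=a_{1k}^i$. For the \emph{if} direction of (1) I would make the triangular change of variables $y_i:=2x_i+c_1(\xi_i)$. A direct computation gives $y_i^2=4\bigl(x_i^2+c_1(\xi_i)x_i\bigr)+c_1(\xi_i)^2=c_1(\xi_i)^2$, so the hypothesis $c_1(\xi_i)^2=0$ forces $y_i^2=0$. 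Since the transition matrix from $(x_i)$ to $(y_i)$ is lower triangular with $2$'s on the diagonal, it is invertible over $\Q$ and the $y_i$ generate the ring; hence $t_i\mapsto y_i$ defines a surjection $\Q[t_1,\dots,t_h]/\langle t_i^2\rangle\twoheadrightarrow H^\ast(B_h;\Q)$. As both algebras have $\Q$-dimension $2^h$ (the Poincaré polynomial of an iterated $\CP^1$-bundle is $(1+s^2)^h$), this surjection is an isomorphism and $B_h$ is $\Q$-trivial.

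For the \emph{only if} direction I would extract $c_1(\xi_i)^2=0$ from the abstract isomorphism using a degree-two invariant, namely the squaring map $v\mapsto v^2$ on $H^2(B_h;\Q)$ and its null cone $N=\{v:v^2=0\}$. In the model ring $\Q[t_i]/\langle t_i^2\rangle$ one has $(\sum c_it_i)^2=2\sum_{i<j}c_ic_j\,t_it_j$, whose vanishing forces all but one $c_i$ to vanish; thus $N$ is exactly the union of the $h$ coordinate lines. A ring isomorphism carries $N$ to the corresponding cone of $H^\ast(B_h;\Q)$, so the latter is again a union of exactly $h$ lines. Using $y_i^2=c_1(\xi_i)^2$ together with the filtration $H^\ast(B_{i-1};\Q)\subset H^\ast(B_h;\Q)$, I would argue by induction on $h$ that these $h$ self-null lines must be the $\Q y_i$, whence $c_1(\xi_i)^2=y_i^2=0$ for every $i$. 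The integral statement follows because $c_1(\xi_i)^2$ is an integral class in the torsion-free group $H^4(B_{i-1};\Z)$, so vanishing rationally forces vanishing in $H^\ast(B_h;\Z)$.

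To prove the rigidity statement (2) I would first use (1) to pin down the combinatorial shape of a $\Q$-trivial Bott manifold. Expanding $c_1(\xi_i)^2=0$ in the squarefree monomial basis $\{x_jx_k\}_{j<k}$ of $H^4$ (using $x_k^2=-c_1(\xi_k)x_k$) yields the relations $\alpha_{ik}\bigl(2\alpha_{ij}-\alpha_{ik}\alpha_{kj}\bigr)=0$ for all $j<k<i$, which severely constrain the matrix $A$ of Remark~\ref{rmk:associated matrix} and produce a rigid normal form. Given this form I would analyze an arbitrary graded ring isomorphism $\varphi\colon H^\ast(B_h;\Z)\to H^\ast(B_h';\Z)$: it preserves degree and the finite set of primitive square-zero classes attached to the fibers, so it must match the defining data of $B_h$ and $B_h'$ up to a prescribed list of moves. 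Each move is geometric: tensoring the projectivized bundle by a line bundle and interchanging the two summands, $P(\underline{\C}\oplus\xi_i)\cong P(\underline{\C}\oplus\xi_i^{-1})$, realizes the fiber involution $x_i\mapsto -x_i-c_1(\xi_i)$; a bundle automorphism realizes a unimodular change among the lower $x_k$; and permuting independent stages realizes a reordering of generators. Composing the corresponding diffeomorphisms produces a diffeomorphism $B_h\to B_h'$ inducing $\varphi$.

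The main obstacle is this final step of (2): showing that \emph{every} ring isomorphism decomposes into the geometrically realizable elementary moves, rather than being some exotic automorphism of the presentation~\eqref{cohomology ring of B_h} with no diffeomorphic realization. This requires a complete description of the isomorphism group of the presentation under the $\Q$-trivial constraints and careful bookkeeping of how the integers $\alpha_{ik}$ transform under each move. The $\Q$-triviality hypothesis is essential precisely here, since the relations $\alpha_{ik}(2\alpha_{ij}-\alpha_{ik}\alpha_{kj})=0$ force the rigid normal form that makes the classification of isomorphisms tractable; without it the automorphism bookkeeping becomes unmanageable and the rigidity can genuinely fail.
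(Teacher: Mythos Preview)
This theorem is quoted from \cite{CM}; the present paper does not supply an independent proof.  The only thing the paper does is remark, after proving Proposition~\ref{prop:Q-trivial}, that statement~(1) is the special case $n_i=1$ of that proposition: the condition~\eqref{relations: chern classes} for $k=n_i+1=2$ reads $4c_2(\xi_i)=c_1(\xi_i)^2$, and $c_2$ of a line bundle vanishes.  Part~(2) is not argued in this paper at all.

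Your treatment of part~(1) is correct and, modulo packaging, matches the paper's route via Proposition~\ref{prop:Q-trivial}.  The ``if'' direction is literally the $n_i=1$ instance of the paper's argument (the substitution $y_i=2x_i+c_1(\xi_i)=(n_i+1)x_i+c_1(\xi_i)$ and a dimension count).  For ``only if'' you phrase things in terms of the null cone $N=\{v:v^2=0\}$, whereas the paper transports the generators through the isomorphism (Lemma~\ref{lem:Q-triviality}) and then pins them down using Lemmas~\ref{lem:one-dimensional} and~\ref{lem:non-zero coefficient}.  These are the same argument: your observation that $N$ is a union of exactly $h$ spanning lines is what Lemma~\ref{lem:Q-triviality} extracts, and the step you leave as ``argue by induction on $h$ using the filtration'' is exactly Lemma~\ref{lem:one-dimensional} (any square-zero class with nonzero $x_h$-coefficient lies on the single line $\Q(2x_h+c_1(\xi_h))$, so exactly one of the $h$ lines involves $x_h$ and the remaining $h-1$ drop to $H^2(B_{h-1})$).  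So your approach is not genuinely different, just slightly more geometric in language.

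For part~(2) there is nothing in the paper to compare against; your sketch (normal form from the relations $\alpha_{ik}(2\alpha_{ij}-\alpha_{ik}\alpha_{kj})=0$, then realize every isomorphism by fiberwise involutions $P(\underline{\C}\oplus\xi_i)\cong P(\underline{\C}\oplus\xi_i^{-1})$, line-bundle twists, and stage permutations) is indeed the strategy of \cite{CM}, and you have correctly located the genuine work in the bookkeeping that every integral isomorphism decomposes into those moves.
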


    In this section we shall prove Proposition~\ref{prop:Q-trivial} and Theorem~\ref{thm:equivalent conditions}. To prove them, we need the following lemmas.

    \begin{lemma}\label{lem:Q-triviality}
        If a generalized Bott manifold $B_h$ is $\Q$-trivial, then there exist linearly independent primitive elements $z_1,\ldots,z_h$ in $H^2(B_h;\Z)$ such that $z_i^{n_i}$ is not zero but $z_i^{n_i+1}$ is zero in $H^\ast(B_h;\Z)$ for $i=1,\ldots,h$.
    \end{lemma}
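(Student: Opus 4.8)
\emph{Proof proposal.} The plan is to pull the obvious classes back from the product of projective spaces along a $\Q$-cohomology isomorphism and then clear denominators to return to the integral lattice.

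First, $\Q$-triviality gives a graded ring isomorphism
\[
\varphi\colon H^\ast\Bigl(\textstyle\prod_{i=1}^h \CP^{n_i};\Q\Bigr)=\Q[y_1,\ldots,y_h]\big/\bigl\langle y_i^{n_i+1}\mid i=1,\ldots,h\bigr\rangle\;\longrightarrow\; H^\ast(B_h;\Q),
\]
with $\deg y_i=2$. Set $w_i:=\varphi(y_i)\in H^2(B_h;\Q)$. Since $\varphi$ is a ring isomorphism we get $w_i^{n_i+1}=\varphi(y_i^{n_i+1})=0$ and $w_i^{n_i}=\varphi(y_i^{n_i})\neq0$; and since $\varphi$ restricts to a linear isomorphism in degree $2$ and the $y_i$ form a basis there, the $w_i$ are linearly independent in $H^2(B_h;\Q)$.

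Next I would integralize. Recall that $H^2(B_h;\Z)$ is free of rank $h$ (the defining relations of $H^\ast(B_h;\Z)$ live in degrees $\geq4$), so $H^2(B_h;\Q)=H^2(B_h;\Z)\otimes\Q$ is this lattice tensored with $\Q$. For each $i$, choosing a common denominator for the coordinates of $w_i$ and then dividing by their greatest common divisor produces a primitive element $z_i\in H^2(B_h;\Z)$ and a positive rational number $\lambda_i$ with $z_i=\lambda_i w_i$. The $z_i$ are linearly independent because the $w_i$ are. Moreover $z_i^{n_i+1}=\lambda_i^{n_i+1}w_i^{n_i+1}=0$ and $z_i^{n_i}=\lambda_i^{n_i}w_i^{n_i}\neq0$ in $H^\ast(B_h;\Q)$; as a generalized Bott manifold is a smooth projective toric variety, $H^\ast(B_h;\Z)$ is torsion free and hence embeds into $H^\ast(B_h;\Q)$, so the same vanishing and non‑vanishing hold in $H^\ast(B_h;\Z)$. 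This gives the desired $z_1,\ldots,z_h$.

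There is no essential difficulty here; the only points needing a little care are making the $z_i$ actually primitive (not merely integral) and invoking torsion freeness of $H^\ast(B_h;\Z)$ to transfer $z_i^{n_i+1}=0$ and $z_i^{n_i}\neq0$ from $\Q$- to $\Z$-coefficients — both standard features of (quasi)toric manifolds already used above.
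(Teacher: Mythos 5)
Your proposal is correct and follows essentially the same route as the paper: pull back the degree-two generators $y_i$ along the $\Q$-cohomology isomorphism, clear denominators to land on primitive classes $z_i$ in the integral lattice, and use torsion-freeness to transfer $z_i^{n_i+1}=0$ and $z_i^{n_i}\neq 0$ back to $\Z$-coefficients. If anything you are slightly more careful than the paper, which multiplies by the least common denominator without dividing out a possible common factor of the resulting integer coordinates.
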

    \begin{proof}
        Let $H^\ast(B_h;\Z)$ be generated by $x_1,\ldots,x_h$ as in~\eqref{cohomology ring of B_h} and let
        $$H^\ast(\prod_{i=1}^h B_h;\Q)=\Q[y_1,\ldots,y_h]\left/\left\langle y_i^{n_i+1}\middle| i=1,\ldots,h \right\rangle.\right.$$ Since both $\{x_1,\ldots,x_h\}$ and $\{y_1,\ldots,y_h\}$ are sets of generators of $H^2(B_h;\Q)$, we can write $$y_i=\sum_{j=1}^h c_{ij}x_j\quad \mbox{ for $i=1,\ldots,h$ and $c_{ij}\in\Q$},$$ where the determinant of the matrix $C=(c_{ij})_{h\times h}$ is non-zero. We may assume that $c_{ij}$'s are irreducible fractions. Multiplying $(c_{i,1},\ldots,c_{i,h})$ by the least common denominator $r_i$ of a set $\{c_{i,1},\ldots,c_{i,h}\}$, we can get a primitive element $z_i=r_iy_i=r_i\sum_{j=1}^h c_{ij}x_j$ in $H^2(B_h;\Z)$ such that $z_i^{n_i+1}=r_i^{n_i+1}y_i^{n_i+1}$ is zero in $H^\ast(B_h;\Z)$ for each $i=1,\ldots,h$. Since the elements $y_1,\ldots,y_h$ are linearly independent, the elements $z_1,\ldots,z_h$ are also linearly independent. Since $y_i^{n_i}$ is not zero in $H^\ast(B_h;\Q)$, $z_i^{n_i}$ cannot be zero in $H^\ast(B_h;\Z)$. This proves the lemma.
    \end{proof}

    \begin{lemma}\cite{CMS10b}\label{lem:one-dimensional}
        Let $B_m$ be an $m$-stage generalized Bott manifold. Then
        the set $$\left\{bx_m+w\in H^2(B_m) \middle| 0 \neq b \in \Z,\,w\in H^2(B_{m-1}),\,(bx_m+w)^{n_m+1}=0\right\}$$ lies in a one-dimensional subspace of $H^2(B_m)$ if it is non-empty.
    \end{lemma}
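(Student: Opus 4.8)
The plan is to reduce the claim to a single elementary observation about the polynomial relations defining $H^\ast(B_m)$. Write $R = H^\ast(B_{m-1})$, which is a free $\Z$-module, and recall from~\eqref{B-H formula} that
\[
H^\ast(B_m) = R[x_m]\left/\left\langle x_m^{n_m+1} + c_1(\xi_m)x_m^{n_m} + \cdots + c_{n_m}(\xi_m)x_m \right\rangle\right.,
\]
so that $H^\ast(B_m)$ is a free $R$-module with basis $1, x_m, \ldots, x_m^{n_m}$. Fix an element $v = b x_m + w$ with $0 \ne b \in \Z$ and $w \in H^2(B_{m-1}) \subset R$, and expand $v^{n_m+1}$ using the binomial theorem; then reduce modulo the Borel--Hirzebruch relation to rewrite $v^{n_m+1}$ in terms of the basis $1, x_m, \ldots, x_m^{n_m}$. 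The leading term in $x_m^{n_m}$ of this reduced expression is $\bigl((n_m+1)b^{n_m}w - b^{n_m+1} c_1(\xi_m)\bigr)x_m^{n_m}$: the contribution $\binom{n_m+1}{1} b^{n_m} w\, x_m^{n_m}$ from the binomial expansion, plus the $x_m^{n_m}$-term $-b^{n_m+1} c_1(\xi_m) x_m^{n_m}$ produced when the top power $b^{n_m+1} x_m^{n_m+1}$ is replaced using the relation. Hence $v^{n_m+1} = 0$ forces, first of all, the coefficient of $x_m^{n_m}$ to vanish:
\[
(n_m+1) b\, w = b^2\, c_1(\xi_m) \quad\text{in } H^2(B_{m-1}),
\]
which, since $b \ne 0$ and $H^2(B_{m-1})$ is torsion-free, determines $w$ uniquely as $w = \frac{b}{n_m+1} c_1(\xi_m)$, i.e.\ $w$ is a rational multiple of $c_1(\xi_m)$ that is pinned down by $b$.

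Granting this, the conclusion follows immediately: any two elements $b x_m + w$ and $b' x_m + w'$ in the set satisfy $w = \frac{b}{n_m+1}c_1(\xi_m)$ and $w' = \frac{b'}{n_m+1}c_1(\xi_m)$, so $b'(b x_m + w) = b(b' x_m + w')$, and therefore both lie on the line through $(n_m+1)x_m + c_1(\xi_m)$ in $H^2(B_m)$. Thus the set is contained in the one-dimensional rational subspace spanned by $(n_m+1)x_m + c_1(\xi_m)$, which is what we want. (Note we only need the vanishing of the single coefficient of $x_m^{n_m}$; the remaining coefficients impose further constraints on $b$ and $c_1(\xi_m)$, but those are not needed for the one-dimensionality statement.)

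The main point requiring care — really the only nonroutine step — is the bookkeeping that isolates the coefficient of $x_m^{n_m}$ after reduction. One must check that no other term of the binomial expansion of $(bx_m+w)^{n_m+1}$ contributes to $x_m^{n_m}$ before reduction (the terms $\binom{n_m+1}{j} b^{n_m+1-j} x_m^{n_m+1-j} w^j$ for $j \ge 2$ have $x_m$-degree $\le n_m - 1$, so they are already in the span of $1, x_m, \ldots, x_m^{n_m-1}$ and untouched by the single reduction), and that reducing $b^{n_m+1} x_m^{n_m+1}$ once via the relation produces exactly $-b^{n_m+1} c_1(\xi_m) x_m^{n_m}$ at the top, with all lower-order corrections living in degrees $< n_m$ in $x_m$. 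Since $x_m^{n_m+1}$ is the only power needing reduction, this is a one-step substitution and the coefficient extraction is clean. Everything else is formal.
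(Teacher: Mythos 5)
Your proof is correct and follows essentially the same route as the paper: the paper's entire proof is the one-line assertion that $(bx_m+w)^{n_m+1}=0$ forces $bc_1(\xi_m)=(n_m+1)w$, which is exactly the relation you derive by extracting the coefficient of $x_m^{n_m}$ after reducing modulo the Borel--Hirzebruch relation. Your version simply supplies the bookkeeping the paper omits.
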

    \begin{proof}
        To satisfy $(bx_m+w)^{n_m+1}=0$, we need $bc_1(\xi_m)=(n_m+1)w$.
    \end{proof}

    \begin{lemma}\cite{CMS10b}\label{lem:non-zero coefficient}
        For an element $z=\sum_{i=1}^h b_ix_i \in H^2(B_h)$, if $b_i$ is non-zero, then $z^{n_i}$ cannot be zero in $H^\ast(B_h)$.
    \end{lemma}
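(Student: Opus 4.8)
The plan is to reduce the statement to a computation on the single projective bundle $B_i\to B_{i-1}$, by projecting $B_h$ down to $B_i$ along a section.

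First I would fix an index $i$ with $b_i\neq 0$. Each stage $B_k=P(\CC\oplus\xi_k)$ carries the canonical section $s_k\colon B_{k-1}\to B_k$ coming from the trivial summand, namely the image of $P(\CC)\hookrightarrow P(\CC\oplus\xi_k)$; along this section the tautological line bundle of $B_k$ is trivial, so $s_k^\ast x_k=0$, while $s_k^\ast$ fixes every class pulled back from $B_{k-1}$ because $s_k$ is a section of $\pi_k$. Composing, $s:=s_h\circ\cdots\circ s_{i+1}\colon B_i\to B_h$ (with $s=\mathrm{id}$ when $i=h$) is a section of the composite bundle projection $B_h\to B_i$, and on cohomology $s^\ast x_m=x_m$ for $m\le i$ and $s^\ast x_m=0$ for $m>i$. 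Hence $s^\ast z=\sum_{m=1}^{i} b_m x_m$, which I write as $b_i x_i+w'$ with $w'=\sum_{m=1}^{i-1}b_m x_m\in H^2(B_{i-1})$.

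Next I would run the computation in $H^\ast(B_i)$. By the Borel--Hirzebruch description \eqref{B-H formula}, $H^\ast(B_i)$ is a free $H^\ast(B_{i-1})$-module on $1,x_i,\ldots,x_i^{n_i}$. The binomial expansion $(b_i x_i+w')^{n_i}=\sum_{k=0}^{n_i}\binom{n_i}{k}b_i^{k}(w')^{n_i-k}x_i^{k}$ involves only powers $x_i^k$ with $k\le n_i$, so it already expresses $s^\ast(z^{n_i})$ in that basis without ever invoking the defining relation, and its $x_i^{n_i}$-coordinate is $b_i^{n_i}\cdot 1\in H^0(B_{i-1})=\Z$, which is nonzero since $b_i\neq 0$. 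Thus $s^\ast(z^{n_i})\neq 0$, and because $s^\ast$ is a ring homomorphism this forces $z^{n_i}\neq 0$ in $H^\ast(B_h)$.

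I do not expect a real obstacle; the only thing to be careful about is that $i$ need not be the top stage $h$, so expanding $z^{n_i}$ directly inside $H^\ast(B_h)$ would drag in the relations $x_j^{n_j+1}=-c_1(\xi_j)x_j^{n_j}-\cdots$ for $j>i$. The role of $s$ is precisely to discard those higher-stage relations, and the trivial-summand section is the one whose action on the generators $x_k$ is transparent. An equivalent route would be to restrict along a fiber inclusion $\CP^{n_i}\hookrightarrow B_i\xrightarrow{\,s\,}B_h$ of $\pi_i$, under which $z\mapsto b_i t$ and $z^{n_i}\mapsto b_i^{n_i}t^{n_i}\neq 0$.
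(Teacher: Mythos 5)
Your proof is correct. It rests on the same key computation as the paper's one\nobreakdash-line argument --- expanding $z^{n_i}$ and isolating the coefficient $b_i^{n_i}$ of $x_i^{n_i}$ --- but the mechanism for concluding nonvanishing is genuinely different. The paper works entirely inside the presentation \eqref{cohomology ring of B_h}: it observes that the monomial $b_i^{n_i}x_i^{n_i}$ appears in the polynomial expansion and that no degree-$n_i$ element of the ideal $\langle x_j\prod_k(\sum a^j_{jk}x_k+x_j)\rangle$ can contain a pure power of $x_i$ (every monomial of the $j$-th generator is divisible by $x_j$ and has degree $n_j+1$), so the class survives in the quotient. You instead push the problem down geometrically: the trivial summand $P(\CC)\subset P(\CC\oplus\xi_k)$ gives sections whose composite $s\colon B_i\to B_h$ kills $x_m$ for $m>i$ and fixes $x_m$ for $m\le i$, after which freeness of $H^\ast(B_i)$ over $H^\ast(B_{i-1})$ on $1,x_i,\dots,x_i^{n_i}$ makes the nonvanishing of $(b_ix_i+w')^{n_i}$ immediate, with no relation ever invoked. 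Your route costs a little setup (checking $s_k^\ast x_k=0$ and $s_k^\ast\pi_k^\ast=\mathrm{id}$) but buys a cleaner conclusion: one never has to reason about ideal membership in a multivariable quotient, and the fiber-restriction variant you sketch at the end ($z\mapsto b_it$, $z^{n_i}\mapsto b_i^{n_i}t^{n_i}$ in $H^\ast(\CP^{n_i})$) is arguably the most transparent form of the argument. Both proofs are complete and correct.
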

    \begin{proof}
        If we expand $(\sum_{i=1}^h b_ix_i)^{n_i}$, there appears a non-zero scalar multiple of $x_i^{n_i}$ because $b_i\neq 0$. Then, $z^{n_i}$ cannot belong to the ideal generated by the polynomials $x_i\prod_{j=1}^{n_i}(\sum_{k=1}^{i-1}a_{jk}^ix_k+x_i)$, hence it is not zero in $H^\ast(B_h)$.
    \end{proof}

    Now we can prove Proposition~\ref{prop:Q-trivial}.
    \begin{proof}[Proof of Proposition~\ref{prop:Q-trivial}]
        If each vector bundle $\xi_i$ satisfies the conditions~\eqref{relations: chern classes}, then $\left(x_i+\frac{1}{n_i+1}c_1(\xi_i)\right)^{n_i+1}$ is zero in $H^\ast(B_h;\Q)$. Since the set
        $$\left\{x_i+\frac{1}{n_i+1}c_1(\xi_i)\middle| i=1,\ldots,h\right\}$$
        generates $H^\ast(B_h;\Q)$ as a graded ring, this shows that $B_h$ is $\Q$-trivial.

        Conversely, if a generalized Bott manifold is $\Q$-trivial, then there are linearly independent and primitive elements $z_1,\ldots,z_h$ in $H^2(B_h;\Z)$ such that $z_i^{n_i+1}$ is zero but $z_i^{n_i}$ is not zero in $H^\ast(B_h)$ by Lemma~\ref{lem:Q-triviality}. We can put $z_i=\sum_{j=1}^h b_{ij}x_j$ with $b_{ij} \in \Z$ for each $i=1,\ldots,h$.

        Now, consider a map $\mu\colon \{1,\ldots,h\} \to \mathbb{N}$ given by $j \mapsto n_j$. Further assume that the image of $\mu$ is the set $\{N_1,\ldots,N_m\}$ with $N_1 < \cdots < N_m$. We will show inductively that each $z_i$ can be written as $r_i\left(x_i+\frac{1}{\mu(i)+1}c_1(\xi_i)\right)$ for some $r_i\in\Z\setminus\{0\}$.

        $\underline{Case~1}:$ Assume $i \in \mu^{-1}(N_1)$. Let $\mu^{-1}(N_1):=\{i_1,\ldots,i_\alpha\}$ with $i_1<\cdots<i_\alpha$. We have $z_i^{N_1+1}=0$. Then, by Lemma~\ref{lem:non-zero coefficient}, we can see that
        \begin{equation}\label{eqn1:when mu(i)=N_1}
            z_i=\sum_{j \in \mu^{-1}(N_1)}b_{ij}x_j,
        \end{equation} that is, $b_{ij'}=0$ for $j'\not\in \mu^{-1}(N_1)$. Note that for each $i\in\mu^{-1}(N_1)$, one of $b_{ij}$'s is nonzero for $j\in\mu^{-1}(N_1)$ because the set $\{z_i\,|\,i\in\mu^{-1}(N_1)\}$ is linearly independent. For some $i_p\in\mu^{-1}(N_1)$, if $b_{i_p i_\alpha}$ is nonzero, then $z_{i_p}\in H^2(B_{i_\alpha})$ and $b_{ii_\alpha}=0$  for all $i\in\mu^{-1}(N_1)\setminus\{i_p\}$ by Lemma~\ref{lem:one-dimensional}. Put $w_{i_\alpha}:=z_{i_p}$. If $b_{i_q i_{\alpha-1}}$ is nonzero for some $i_q\in \mu^{-1}(N_1)\setminus\{i_p\}$, then $z_{i_q}\in H^2(B_{i_{\alpha-1}})$ and $b_{ii_{\alpha-1}}=0$ for all $i\in\mu^{-1}(N_1)\setminus\{i_p,i_q\}$. Now, put $w_{i_{\alpha-1}}:=z_{i_q}$. In this way, for each $i\in\mu^{-1}(N_1)$, we can obtain $w_i\in H^2(B_i)$ such that $w_i\not\in H^2(B_{i-1})$ and $w_i^{N_1+1}=0$ in $H^\ast(B_h)$. Moreover, from the proof of Lemma~\ref{lem:one-dimensional}, we can write
        \begin{equation}\label{eqn2:when mu(i)=N_1}
            w_i :=r_i\left(x_i+\frac{1}{N_1+1}c_1(\xi_i)\right) \in H^2(B_i)
        \end{equation} for each $i \in \mu^{-1}(N_1)$.
        In particular, if $N_1=1$, then $w_i$ is of the form either $\pm x_i$ or $\pm(2x_i+c_1(\xi_i))$ for $i\in\mu^{-1}(N_1)$. Furthermore, without loss of generality, we may assume that $z_i=w_i$ for $i\in\mu^{-1}(N_1)$.

        $\underline{Case~2}:$ Assume that $z_k=r_k\left(x_k+\frac{1}{\mu(k)+1}c_1(\xi_k)\right)$ for $N_1\leq \mu(k)\leq N_{n-1}$ and let $\ell\in\mu^{-1}(N_{n})$. Then we have $z_\ell^{N_n+1}=0$ . Then by Lemma~\ref{lem:non-zero coefficient}, we can easily see that
        $$z_\ell=\sum_{k \in \mu^{-1}(N_{<n})} b_{\ell k}x_k+\sum_{j \in \mu^{-1}(N_n)} b_{\ell j}x_j,$$ where $N_{<n}=\{N_1,\ldots,N_{n-1}\}$. That is, $b_{\ell j'}=0$ for $j'\not\in\mu^{-1}(N_{\leq n})$. Since $z_\ell^{N_n+1}$ is zero in $H^\ast(B_h)$, we have
        \begin{equation}\label{eqn1:when m(i)=N_n}
            \begin{split}
                &\left(\sum_{k \in \mu^{-1}(N_{<n})} b_{\ell k}x_k+\sum_{j \in \mu^{-1}(N_n)} b_{\ell j}x_j\right)^{N_n+1}\\
                &\quad=\sum_{k\in\mu^{-1}(N_{<n})} f_k(x_1,\ldots,x_h)(x_k^{\mu(k)+1}+c_1(\xi_k)x_k^{\mu(k)}+\cdots+c_{\mu(k)}(\xi_k)x_k)\\
                &\qquad+\sum_{j\in\mu^{-1}(N_n)}b_{\ell j}^{N_n+1} (x_j^{N_n+1}+c_1(\xi_j)x_j^{N_n}+\cdots+c_{N_n}(\xi_j)x_j)\\
            \end{split}
        \end{equation} as polynomials, where $f_k(x_1,\ldots,x_h)$ is a homogeneous polynomial of degree $N_n-\mu(k)$ for each $k\in\mu^{-1}(N_{<n})$. Note that for each $\ell\in\mu^{-1}(N_n)$, one of $b_{\ell j}$'s is non-zero for $j\in\mu^{-1}(N_n)$ from the linearly independency of the set $\{z_i\,|\,i\in \mu^{-1}(N_{\leq n})\}$. Let $\mu^{-1}(N_n):=\{\ell_1,\ldots,\ell_\beta\}$ with $\ell_1<\cdots<\ell_\beta$. Assume $b_{\ell_p\ell_\beta}$ is nonzero for some $\ell_p\in\mu^{-1}(N_n)$. Substituting $\ell=\ell_p$ into \eqref{eqn1:when m(i)=N_n} and comparing the monomials containing $x_{\ell_\beta}^{N_n}$ as a factor on both sides of~\eqref{eqn1:when m(i)=N_n}, we have
        \begin{equation*}
            (N_n+1)\left(\sum_{k\in\mu^{-1}(N_{<n})}b_{\ell_p k}x_k+\sum_{j\in\mu^{-1}(N_n)\atop j \neq \ell_\beta}b_{\ell_p j}x_j\right)=b_{\ell_p \ell_\beta}^{N_n+1}c_1(\xi_{\ell_\beta}).
        \end{equation*}
        Since $c_1(\xi_{\ell_\beta})$ belongs to $H^2(B_{\ell_\beta -1})$, we can see that $b_{\ell_p k}=0$ for $k>\ell_\beta$. That is,
        $$z_{\ell_p}=\sum_{k\in\mu^{-1}(N_{<n})\atop k<\ell_\beta}b_{\ell_p k}x_k+\sum_{j\in\mu^{-1}(N_n)}b_{\ell_p j}x_j.$$ Thus, we can see that $z_{\ell_p}\in H^2(B_{\ell_\beta})$ and $b_{\ell\ell_\beta}=0$ for all $\ell\in \mu^{-1}(N_n)\setminus\{\ell_p\}$ by Lemma~\ref{lem:one-dimensional}. Put $w_{\ell_\beta}:=z_{\ell_p}$. Now assume that $b_{\ell_q\ell_{\beta-1}}$ is nonzero for some $\ell_q\in\mu^{-1}(N_n)\setminus\{\ell_p\}$. Substituting $\ell=\ell_q$ into \eqref{eqn1:when m(i)=N_n} and comparing the monomials containing $x_{\ell_{\beta-1}}^{N_n}$ as a factor on both sides of \eqref{eqn1:when m(i)=N_n}, we have
        \begin{equation*}
            (N_n+1)\left(\sum_{k\in\mu^{-1}(N_{<n})}b_{\ell_q k}x_k+\sum_{j\in\mu^{-1}(N_n)\atop j < \ell_{\beta-1}}b_{\ell_q j}x_j\right)=b_{\ell_q \ell_{\beta-1}}^{N_n+1}c_1(\xi_{\ell_{\beta-1}}).
        \end{equation*}
        Since $c_1(\xi_{\ell_{\beta-1}})$ belongs to $H^2(B_{\ell_{\beta-1}-1})$, we can see that $b_{\ell_qk}=0$ for $k>\ell_{\beta-1}$, and hence,
        $$z_{\ell_q}=\sum_{k\in\mu^{-1}(N_{<n})\atop k<\ell_{\beta-1}}b_{\ell_p k}x_k+\sum_{j\in\mu^{-1}(N_n)\atop j<\ell_\beta}b_{\ell_p j}x_j.$$
        Thus, we can see that $z_{\ell_q}\in H^2(B_{\ell_{\beta-1}})$ and $b_{\ell\ell_{\beta-1}}=0$ for all $\ell\in\mu^{-1}(N_n)\setminus\{\ell_p,\ell_q\}$ by Lemma~\ref{lem:one-dimensional}. Now, put $w_{\ell_{\beta-1}}:=z_{\ell_q}$. In this way, for each $\ell\in\mu^{-1}(N_n)$, we can obtain $w_\ell\in H^2(B_\ell)$ such that $w_\ell\not\in H^2(B_{\ell-1})$ and $w_\ell^{N_n+1}=0$ in $H^2(B_h)$. Moreover, from the proof of Lemma~\ref{lem:one-dimensional}, $w_\ell$ can be written as $r_\ell\left(x_\ell+\frac{1}{N_n+1}c_1(\xi_\ell)\right)$. Furthermore, without loss of generality, we may assume that $z_\ell=w_\ell$ for $\ell\in \mu^{-1}(N_n)$.

        By Cases 1 and 2, we can see that, for each $i=1,\ldots,h$, we can write $$z_i=r_i\left(x_i+\frac{1}{n_i+1}c_1(\xi_i)\right)$$ for some $r_i\in\Z\setminus\{0\}$.
        Therefore, $\{(n_i+1)x_i+c_1(\xi_i)\}^{n_i+1}$ is zero in $H^\ast(B_h)$.
        From this, we can see
        \begin{equation*}
            (n_i+1)^kc_k(\xi_{i})={n_i+1\choose k}c_1(\xi_{i})^k\mbox{ and } c_1(\xi_{i})^{n_i+1}=0
        \end{equation*}
        $k=1,\ldots,n_i$.
    \end{proof}
    Hence, the above theorem implies the statement (1) of Theorem \ref{thm:CM}.

    By using Proposition~\ref{prop:Q-trivial}, we can prove Theorem~\ref{thm:equivalent conditions}.

    \begin{proof}[Proof of Theorem~\ref{thm:equivalent conditions}]
        We first prove the implication \eqref{thm:equiv_condition 1}$\Rightarrow$\eqref{thm:equiv_condition 2}. By Proposition~\ref{prop:Q-trivial}, we have the relation
        \begin{equation}\label{eq:relation between c_1 and c_2}
            (n_i+1)^2c_2(\xi_i)=\frac{n_i(n_i+1)}{2}c_1(\xi_i)^2.
        \end{equation}
        If $n_i=2$, from \eqref{eq:chern class formula} and \eqref{eq:relation between c_1 and c_2}, we have
        \begin{equation}\label{eqn:c_1,c_2 when n=2}
            \begin{split}
            &\{(a_{11}^ix_1+\cdots+a_{1,i-1}^ix_{i-1})+(a_{21}^ix_1+\cdots+a_{2,i-1}^ix_{i-1})\}^2\\
            &=3(a_{11}^ix_1+\cdots+a_{1,i-1}^ix_{i-1})(a_{21}^ix_1+\cdots+a_{2,i-1}^ix_{i-1}).
            \end{split}
        \end{equation}
        For $j=1,\ldots,i-1$, since $x_j^2 \neq 0$ in $H^\ast(B_i)$, by comparing the coefficients of $x_j^2$ on both sides of \eqref{eqn:c_1,c_2 when n=2}, we have $(a_{1j}^i+a_{2j}^i)^2=3a_{1j}^ia_{2j}^i$ whose integer solution is only $a_{1j}^i=a_{2j}^i=0$. If $n_i=n>2$, then we have
        \begin{equation}\label{eqn:c_1,c_2 when n>2}
            \begin{split}
            &n\{(a_{11}^ix_1+\cdots+a_{1,i-1}^ix_{i-1})+\cdots+(a_{21}^ix_1+\cdots+a_{2,i-1}^ix_{i-1})\}^2\\
            &=2(n+1)\{(a_{11}^ix_1+\cdots+a_{1,i-1}^ix_{i-1})(a_{21}^ix_1+\cdots+a_{2,i-1}^ix_{i-1})\\
            &\quad+\cdots+(a_{n-1,1}^ix_1+\cdots+a_{n-1,i-1}^ix_{i-1})(a_{n,1}^ix_1+\cdots+a_{n,i-1}^ix_{i-1})\}.
            \end{split}
        \end{equation}
        Since $x_j^2 \neq 0$ in $H^\ast(B_i)$ for $j=1,\ldots,i-1$, by comparing the coefficients of $x_j^2$ on both sides of \eqref{eqn:c_1,c_2 when n>2} we have
        \begin{equation}\label{eqn:c_1,c_2 when n>2(2)}
            n(a_{i,j}^i+\cdots+a_{nj}^i)^2=2(n+1)\sum_{1\leq k < \ell \leq n}a_{kj}^ia_{\ell j}^i.
        \end{equation}
        The equation~\eqref{eqn:c_1,c_2 when n>2(2)} is equivalent to
        \begin{equation*}
            (n-2)\{(a_{1j}^i)^2+\cdots+(a_{nj}^i)^2\}+(a_{1j}^i-a_{2j}^i)^2+\cdots+(a_{n-1,j}^i-a_{nj}^i)^2=0.
        \end{equation*}
        Since $n>2$, we can see that $a_{1j}^i=\cdots=a_{nj}^i=0$ for each $j=1,\ldots,i-1$. Therefore, in any case, $c(\xi_i)$ is trivial for all $i=1,\ldots,h$.

        The implications \eqref{thm:equiv_condition 2}$\Rightarrow$\eqref{thm:equiv_condition 3} and \eqref{thm:equiv_condition 3}$\Rightarrow$\eqref{thm:equiv_condition 1} are clear.

        The implication \eqref{thm:equiv_condition 3}$\Leftrightarrow$\eqref{thm:equiv_condition 4} is proved by Choi-Masuda-Suh~\cite{CMS10b}.

        Therefore, all four conditions are equivalent.
    \end{proof}
    From the proof of \eqref{thm:equiv_condition 1}$\Rightarrow$\eqref{thm:equiv_condition 2}, we have the following corollary.
    \begin{corollary}\label{cor:q-trivial implies diagonal}
        A $\Q$-trivial generalized Bott manifold $B_h$ is weakly equivariantly diffeomorphic to $\prod_{i=1}^h \CP^{n_i}$ provided $n_i>1$ for all $i$.
    \end{corollary}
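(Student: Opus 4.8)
The plan is to recognize Corollary~\ref{cor:q-trivial implies diagonal} as nothing more than the restriction of Theorem~\ref{thm:equivalent conditions} to its first condition: under the standing hypothesis $n_i>1$ for all $i$, the equivalence \eqref{thm:equiv_condition 1}$\Leftrightarrow$\eqref{thm:equiv_condition 4} already delivers the conclusion. Thus I would simply invoke the chain of implications \eqref{thm:equiv_condition 1}$\Rightarrow$\eqref{thm:equiv_condition 2}$\Rightarrow$\eqref{thm:equiv_condition 3}$\Leftrightarrow$\eqref{thm:equiv_condition 4} established there, so that the weakly equivariant diffeomorphism $B_h\cong\prod_{i=1}^h\CP^{n_i}$ is exactly the content of condition \eqref{thm:equiv_condition 4}.

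Concretely, the first implication is the substantive one, and I would reuse its proof verbatim. Starting from $\Q$-triviality, Proposition~\ref{prop:Q-trivial} supplies the degree-two relation \eqref{eq:relation between c_1 and c_2}, namely $(n_i+1)^2c_2(\xi_i)=\tfrac{n_i(n_i+1)}{2}c_1(\xi_i)^2$. Expanding $c_1(\xi_i)$ and $c_2(\xi_i)$ through the splitting \eqref{eq:chern class formula} and comparing the coefficients of $x_j^2$ (legitimate since $x_j^2\neq 0$ in $H^\ast(B_i)$ for each $j<i$) yields the numerical identity \eqref{eqn:c_1,c_2 when n>2(2)} in the integer entries $a_{kj}^i$. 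I would then rewrite this identity as a nonnegative combination of squares of the $a_{kj}^i$; because $n_i>1$, every square must vanish, forcing $a_{kj}^i=0$ for all $j,k$, and hence $c(\xi_i)=1$ by \eqref{eq:chern class formula}. Once all total Chern classes are trivial, each $\xi_i=(\CC)^{n_i}$ is a trivial bundle, so every stage $B_i=P(\CC\oplus\xi_i)=B_{i-1}\times\CP^{n_i}$ is a trivial projectivization; iterating identifies $B_h$ with $\prod_{i=1}^h\CP^{n_i}$ and shows that $B_h$ is $\Z$-trivial. Finally, the equivalence \eqref{thm:equiv_condition 3}$\Leftrightarrow$\eqref{thm:equiv_condition 4} of Choi--Masuda--Suh~\cite{CMS10b} upgrades this bundle-theoretic identification to a weakly equivariant diffeomorphism, which is precisely the assertion of the corollary.

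The only delicate point—and the reason the hypothesis $n_i>1$ is indispensable—is the passage from the numerical identity \eqref{eqn:c_1,c_2 when n>2(2)} to the vanishing of the $a_{kj}^i$. When $n_i>2$ the sum-of-squares rewriting kills each term at once, while the borderline case $n_i=2$ must be handled separately via the Diophantine equation $(a_{1j}^i+a_{2j}^i)^2=3a_{1j}^ia_{2j}^i$, whose only integer solution is $a_{1j}^i=a_{2j}^i=0$. By contrast, for $n_i=1$ the relation collapses to merely $c_1(\xi_i)^2=0$ in the cohomology ring, which allows a nonzero $c_1(\xi_i)$ (cf.\ part (1) of Theorem~\ref{thm:CM}); so the argument genuinely fails for $\CP^1$-stages, exactly as the hypothesis demands. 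I expect no further obstacle: every remaining step is the routine transfer from triviality of Chern classes to triviality of the bundles, together with the citation of~\cite{CMS10b} for the equivariant refinement.
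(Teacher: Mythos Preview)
Your proposal is correct and follows the paper's line: both arguments rest on the computation from the proof of \eqref{thm:equiv_condition 1}$\Rightarrow$\eqref{thm:equiv_condition 2} that $a_{kj}^i=0$ for all $j,k$ when $n_i>1$. The only difference is in the last step. The paper observes directly that the vanishing of all $a_{kj}^i$ makes the associated vector matrix~\eqref{associated matrix} block diagonal, and a diagonal characteristic matrix immediately yields the weakly equivariant diffeomorphism to $\prod_{i=1}^h\CP^{n_i}$ (cf.\ Remark~\ref{rmk:associated matrix}). You instead pass through $\Z$-triviality and invoke \eqref{thm:equiv_condition 3}$\Leftrightarrow$\eqref{thm:equiv_condition 4} from~\cite{CMS10b}; this works, but is a small detour. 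One phrasing to tighten: the implication ``$c(\xi_i)=1\Rightarrow\xi_i$ trivial'' is not true for arbitrary bundles, so you should say instead that $a_{kj}^i=0$ makes each line-bundle summand of $\xi_i$ trivial directly, hence $\xi_i$ itself is trivial.
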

    \begin{proof}
        Since for each $i=1,\ldots,h$, $a_{1j}^i=\cdots=a_{nj}^i=0$ for all $j=1,\ldots,i-1$. Hence, the associated vector matrix of $B_h$ is block diagonal. Hence, the assertion is true.
    \end{proof}

    From Theorem~\ref{thm:equivalent conditions}, we have the following corollary.

    \begin{corollary}
        Let $M$ be a quasitoric manifold. If $H^\ast(M;\Q)$ is isomorphic to $H^\ast(\prod_{i=1}^h \CP^{n_i};\Q)$, then $M$ is homeomorphic to $\prod_{i=1}^h \CP^{n_i}$ provided $n_i>1$ for all $i$.
    \end{corollary}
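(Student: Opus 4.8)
The plan is to reduce the statement to Theorem~\ref{thm:equivalent conditions} by first recognizing $M$ as a generalized Bott manifold. Write $n=\sum_{i=1}^h n_i$, so that $\dim_\R M=2n$ and $\dim_\Q H^2(M;\Q)=h$; if $P$ denotes the orbit polytope of $M$, then $P$ is a simple $n$-polytope with exactly $n+h$ facets. Since the Poincar\'e polynomial of $M$ equals $\prod_{i=1}^h(1+t^2+\cdots+t^{2n_i})$, the $h$-vector of $P$ coincides with that of $\prod_{i=1}^h\Delta^{n_i}$. At this point I would appeal to the classification of quasitoric manifolds over products of simplices due to Choi--Masuda--Suh: when every $n_i>1$, a quasitoric manifold with the rational cohomology ring of $\prod_{i=1}^h\CP^{n_i}$ is supported over a polytope combinatorially equivalent to $\prod_{i=1}^h\Delta^{n_i}$, and every quasitoric manifold over such a polytope is, as a space, a generalized Bott manifold. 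Granting this, $M$ is homeomorphic to some generalized Bott manifold $B_h$ with fibre dimensions $n_1',\dots,n_h'$.

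Next I would match the parameters $n_i'$ against the $n_i$. Comparing Poincar\'e polynomials and setting $q=t^2$ gives $\prod_{i=1}^h(1+q+\cdots+q^{n_i'})=\prod_{i=1}^h(1+q+\cdots+q^{n_i})$. Using $1+q+\cdots+q^{m-1}=\prod_{d\mid m,\ d>1}\Phi_d(q)$, for every $d\ge 2$ the multiplicity of the cyclotomic factor $\Phi_d$ on the left is $\#\{i:d\mid n_i'+1\}$ and on the right $\#\{i:d\mid n_i+1\}$; a M\"obius inversion over the divisibility poset recovers the multiset $\{n_i+1\}$ from these multiplicities, so $\{n_i'\}=\{n_i\}$ as multisets. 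In particular every $n_i'>1$, and $B_h$ is $\Q$-trivial by the very definition of $\Q$-triviality.

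Then Theorem~\ref{thm:equivalent conditions} applies to $B_h$: being a $\Q$-trivial generalized Bott manifold with all fibre dimensions greater than $1$, it satisfies condition~\eqref{thm:equiv_condition 4}, so it is weakly equivariantly diffeomorphic, in particular homeomorphic, to $\prod_{i=1}^h\CP^{n_i'}=\prod_{i=1}^h\CP^{n_i}$. Composing with the homeomorphism $M\cong B_h$ yields the claim.

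I expect the main obstacle to be the first step: the passage from the cohomological hypothesis to the statement that $M$ is a generalized Bott manifold. Its combinatorial core is that, among simple polytopes with the prescribed face numbers, the hypothesis $n_i>1$ forces $P\cong\prod_{i=1}^h\Delta^{n_i}$ --- it is precisely $\CP^1$-factors that would permit non-product polytopes with the same Betti numbers (Hirzebruch-surface type phenomena) --- together with the fact that a quasitoric manifold over a product of simplices is assembled from iterated projective bundles. Everything after that is routine bookkeeping with Poincar\'e polynomials and the already-established Theorem~\ref{thm:equivalent conditions}.
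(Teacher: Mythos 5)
Your proof is correct and follows essentially the same route as the paper: both arguments rest on the Choi--Masuda--Suh result (\cite{CMS10a}) that a quasitoric manifold with the rational cohomology of $\prod_{i=1}^h\CP^{n_i}$ is homeomorphic to a generalized Bott manifold, and then invoke Theorem~\ref{thm:equivalent conditions} to identify that $\Q$-trivial generalized Bott manifold with the product of projective spaces. The only difference is that you spell out the matching of fibre dimensions via cyclotomic factors of the Poincar\'e polynomial, a step the paper leaves implicit.
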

    \begin{proof}
        By \cite{CMS10a}, if $H^\ast(M;\Q)$ is isomorphic to $H^\ast(\prod_{i=1}^h \CP^{n_i};\Q)$, then $M$ is homeomorphic to a generalized Bott manifold. But a $\Q$-trivial generalized Bott manifolds with $n_i>1$ is diffeomorphic to $\prod_{i=1}^h \CP^{n_i}$. Hence, $M$ is homeomorphic to $\prod_{i=1}^h \CP^{n_i}$.
    \end{proof}



    The following is the counter-example of Question~\ref{question:not Z-isomorphic}.
    \begin{example}\label{example:not Z-isomorphic}
        Let $B$ be a fiber bundle $P(\underline{\C}^3\oplus\xi)$ over $\CP^2$ and let $B'$ be a fiber bundle $P(\underline{\C}^3\oplus\xi^{\otimes 2})$ over $\CP^2$, where $\xi$ is the tautological line bundle over $\CP^2$. Let $y$ (respectively, $Y$) denote the negative of the first Chern class of the tautological line bundle over $B_2$ (respectively, $B'_2$). Then their cohomology rings are
        $$H^\ast(B)=\Z[x,y]/\langle x^3, y(y^3+xy^2)\rangle$$ and $$H^\ast(B')=\Z[X,Y]/\langle X^3, Y(Y^3+2XY^2)\rangle.$$
        Then the map $\phi$ defined by $\phi(x)=2X$ and $\phi(y)=Y$ is an isomorphism from $H^\ast(B;\Q)\to H^\ast(B';\Q)$.
        But this $\phi$ is not a $\Z$-isomorphism. Suppose that $\psi$ is an isomorphism $H^\ast(B;\Z)\to H^\ast(B';\Z)$.
        Then there exist $\alpha,\beta,\gamma,\delta$ in $\Z$ such that
        $$\left(\begin{array}{c}\psi(x)\\ \psi(y)\end{array}\right)
        =\left(\begin{array}{cc} \alpha & \beta\\ \gamma & \delta \end{array}\right)\left(\begin{array}{c}X\\Y\end{array}\right)$$
        and $\alpha\delta-\beta\gamma = \pm 1$. Since $\psi(x^3)=0$ in $H^\ast(B';\Z)$, we have $$(\alpha X+\beta Y)^3=\alpha^3X^3$$ as polynomials. So, we can see that $\beta$ is zero and $\alpha=\pm 1$, and hence $\delta=\pm 1$. Since $\psi(y(y^3+xy^2))$ is zero in $H^\ast(B';\Z)$, we have
        \begin{equation}\label{eqn:example1}
            (\gamma X+\delta Y)^3((\alpha+\gamma)X+\delta Y)=(aX+bY)X^3+cY(Y^3+2XY^2)
        \end{equation}
        as polynomials in $\Z[X,Y]$. By comparing the coefficients of $XY^3$ on both sides
        of~\eqref{eqn:example1}, we can see that
        \begin{equation}\label{eqn:example1-compare_coefficients}
            2c=3\gamma\delta^3+(\alpha+\gamma)\delta^3)=\delta(\alpha+4\gamma).
        \end{equation}
        Since the right hand side of~\eqref{eqn:example1-compare_coefficients} is odd, there is no such an integer $C$. Hence, there is no such $\Z$-isomorphism $\psi$.
    \end{example}
        Now consider $\Q$-trivial generalized Bott manifolds $B_h$ which have $\CP^1$-fibers, that is, $n_k=1$ for some $k\in [h]$.

    \begin{lemma}\label{lem:ordering of the tower}
        Let $B_h$ and $B_h'$ be two $h$-stage generalized Bott towers. If the associated vector matrices to them are
        \begin{equation*}
            A=\left(\begin{array}{ccccc}
            \mathbf{1}&&&&\\
            \ast&\ddots&&&\\
            \ast & \ast &\mathbf{1}&&\\
            \va_1&\cdots&\va_{h-2}&\mathbf{1}\\
            \vb_1&\cdots&\vb_{h-2}&\mathbf{0}&\mathbf{1}\end{array}
            \right)
            \mbox{ and }A'=\left(\begin{array}{ccccc}
            \mathbf{1}&&&&\\
            \ast&\ddots&&&\\
            \ast & \ast &\mathbf{1}&&\\
            \vb_1&\cdots&\vb_{h-2}&\mathbf{1}&\\
            \va_1&\cdots&\va_{h-2}&\mathbf{0}&\mathbf{1}\end{array}
            \right),
        \end{equation*} respectively, then $B_h$ and $B_h'$ are equivariantly diffeomorphic.
    \end{lemma}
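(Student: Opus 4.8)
The plan is to unwind both towers down to their common $(h-2)$-stage truncation and then recognize $B_h$ and $B_h'$ as the two orderings of a single fibered product of projective bundles over that truncation.

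Since the upper-left $(h-2)\times(h-2)$ blocks of $A$ and $A'$ coincide, the $(h-2)$-stage truncations of the two towers agree; write $X:=B_{h-2}=B'_{h-2}$. Let $\eta$ be the rank-$n_{h-1}$ Whitney sum of line bundles over $X$ whose $j$-th summand has first Chern class $\sum_{k=1}^{h-2}(\va_k)_j\,x_k$, and $\zeta$ the rank-$n_h$ Whitney sum over $X$ whose $j$-th summand has first Chern class $\sum_{k=1}^{h-2}(\vb_k)_j\,x_k$. Reading off the rows of $A$: one has $B_{h-1}=P(\CC\oplus\eta)$, and, since the entry of the last block-row of $A$ in column $h-1$ is $\mathbf 0$, the bundle $\xi_h$ over $B_{h-1}$ is the pullback $\pi_{h-1}^\ast\zeta$. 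Symmetrically, from $A'$ one reads $B'_{h-1}=P(\CC\oplus\zeta)$ and $\xi'_h=(\pi'_{h-1})^\ast\eta$. This is precisely where the hypothesis $\mathbf 0$ enters: it is what makes interchanging the last two block-rows yield a legitimate generalized Bott tower structure at all.

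Next I would use that projectivization commutes with pullback: for $f\colon Y\to X$ and a bundle $V$ on $X$ one has $P(f^\ast V)\cong Y\times_X P(V)$ naturally. Applying this with $Y=B_{h-1}$, $f=\pi_{h-1}$, $V=\CC\oplus\zeta$ gives
\[
B_h=P\bigl(\pi_{h-1}^\ast(\CC\oplus\zeta)\bigr)\cong B_{h-1}\times_X P(\CC\oplus\zeta)=P(\CC\oplus\eta)\times_X P(\CC\oplus\zeta),
\]
and the identical computation for the primed tower gives $B'_h\cong P(\CC\oplus\zeta)\times_X P(\CC\oplus\eta)$. Swapping the two factors is then a diffeomorphism $B_h\to B_h'$.

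It remains to make this diffeomorphism equivariant. Since $\eta$ and $\zeta$ are sums of torus-equivariant line bundles, the fibered product $P(\CC\oplus\eta)\times_X P(\CC\oplus\zeta)$ carries the obvious action of $T^{n_1}\times\cdots\times T^{n_{h-2}}\times T^{n_{h-1}}\times T^{n_h}$, in which the last two factors act fiberwise on the two projective-bundle factors and the rest acts on $X$ and lifts; under the identification above this agrees with the standard quasitoric action on $B_h$, and likewise for $B_h'$ with the last two torus factors transposed (note that $\prod_{i=1}^h\Delta^{n_i}$ is unchanged by reordering those two factors). The factor-swap then intertwines the two actions via the automorphism of the torus transposing $T^{n_{h-1}}$ and $T^{n_h}$. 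The main point requiring care is this equivariance bookkeeping — pinning down the equivariant structures on $\eta$, $\zeta$ and their pullbacks, and checking that both the pullback isomorphism and the factor swap are equivariant for the correct torus automorphism; everything else is formal naturality, the only geometric input being that $\xi_h$ (resp. $\xi'_h$) is pulled back from $X$.
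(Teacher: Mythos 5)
Your proof is correct, but it takes a genuinely different route from the paper's. The paper disposes of the lemma in one line: it invokes the fact from \cite{CMS10a} that two generalized Bott manifolds whose associated vector matrices are conjugate by a permutation matrix are equivariantly diffeomorphic, and simply observes that $A'=E_\sigma AE_\sigma^{-1}$ for the transposition $\sigma$ of $h-1$ and $h$. You instead give a self-contained geometric proof of exactly the special case needed: the $\mathbf{0}$ block forces $\xi_h$ to be pulled back from $X=B_{h-2}$, so that $B_h\cong P(\CC\oplus\eta)\times_X P(\CC\oplus\zeta)$ and $B_h'\cong P(\CC\oplus\zeta)\times_X P(\CC\oplus\eta)$, and the factor swap is the desired diffeomorphism. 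What the paper's approach buys is brevity and generality (conjugation by an arbitrary permutation matrix, not just an adjacent transposition made possible by a vanishing block); what yours buys is independence from the external reference and an explicit description of the map. The one point to watch, which you flag yourself, is that the factor swap intertwines the torus actions only after composing with the automorphism of the torus exchanging $T^{n_{h-1}}$ and $T^{n_h}$, so strictly speaking you produce a weakly equivariant diffeomorphism; this is the same notion delivered by the permutation-conjugation fact the paper cites and is all that is used downstream in the proof of Theorem~\ref{thm:Q-trivial gen. Bott}, so the discrepancy with the word ``equivariantly'' in the statement is shared with the paper's own argument and is harmless.
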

    \begin{proof}
        Note that this lemma can be seen by the fact that $B_h$ and $B_h'$ are equivariantly diffeomorphic if two associated vector matrices are conjugated by a permutation matrix, see the paper \cite{CMS10a}. It is obvious that $$A'=E_\sigma AE_\sigma^{-1},$$ where $\sigma:=(1,\ldots,h-2,h,h-1)$ is the permutation on $[h]$ which permutes only $h-1$ and $h$.
    \end{proof}

    Now, we can prove Theorem~\ref{thm:Q-trivial gen. Bott}.
    \begin{proof}[Proof of Theorem~\ref{thm:Q-trivial gen. Bott}]
        Let $B_h$ be a $\Q$-trivial generalized Bott manifold whose associated matrix is of the form~\eqref{associated matrix}.

        Consider a map $\mu\colon\{1,\ldots,h\}\to\mathbb{N}$ given by $j\mapsto n_j$ and assume that the image of $\mu$ is the set $\{N_1,\ldots,N_m\}$ with $1=N_1<N_2<\cdots<N_m$.

        For each $i\in\mu^{-1}(1)$, by Proposition~\ref{prop:Q-trivial}, we have $c_1(\xi_i)^2=0$ in $H^\ast(B_h)$. Since $x_k^2\neq 0$ in $H^\ast(B_h)$ for $k\not\in\mu^{-1}(1)$, we can see that $a_{1k}^i=0$ for $k\in [i-1]$ with $n_k>1$.

        Now suppose that $n_j>1$. Then by Proposition~\ref{prop:Q-trivial}, we have the relation
        $$(n_j+1)^2c_2(\xi_j)=\frac{n_j(n_j+1)}{2}c_1(\xi_j)^2.$$ Since $x_k^2\neq 0$ in $H^\ast(B_h)$ for $n_k>1$, we can show that $\va_k^j=\mathbf{0}$ by using the same argument to the proof of Theorem~\ref{thm:equivalent conditions}.

        Since $\va_k^j=\mathbf{0}$ for all $n_k>1$, by Lemma~\ref{lem:ordering of the tower}, $B_h$ is diffeomorphic to the $\Q$-trivial generalized Bott manifold $B'$ whose associated matrix is of the form
        \begin{equation}\label{Q-tirivial associated matrix}
            (A')^T=\left(\begin{array}{cccc|cccc}
            1&&&&&&&\\
            a_{11}^2&1&&&&&&\\
            \vdots&\vdots&\ddots&&& &&\\
            a_{11}^r&a_{1,2}^r&\cdots&1&&&&\\
            \hline
            \va_1^{r+1}&\va_{2}^{r+1}&\cdots&\va_r^{r+1}&\mathbf{1}&&&\\
            \va_1^{r+2}&\va_{2}^{r+2}&\cdots&\va_r^{r+2}&\mathbf{0}&\mathbf{1}&&\\
            \vdots&\vdots&\cdots&\vdots&\vdots&\ddots&&\\
            \va_1^h&\va_{2}^h&\cdots&\va_r^h&\mathbf{0}&\cdots&\mathbf{0}&\mathbf{1}\\
             \end{array}\right),
        \end{equation} where $r$ is the cardinality of the set $\mu^{-1}(1)$, that is, $r=|\mu^{-1}(1)|$.
        This proves the theorem.
    \end{proof}

\bigskip

\end{document}